\numberwithin{equation}{section}
\def\le{\leqslant}
\def\ge{\geqslant}
\def\R{\mathbb{R}}
\def\C{\mathbb{C}}
\def\Z{\mathbb{Z}}
\newcommand{\relmiddle}[1]{\mathrel{}\middle#1\mathrel{}}
\theoremstyle{plain}
\newtheorem{theorem}{Theorem}[section]
\newtheorem{definition}[theorem]{Definition}
\newtheorem{lemma}[theorem]{Lemma}
\theoremstyle{remark}
\newtheorem{remark}[theorem]{Remark}
\begin{document}
\title[]
{Threshold for the existence of scattering states for nonlinear Schr\"odinger equations without gauge invariance}
\author[H. Miyazaki]{Hayato Miyazaki}
\address[]{Teacher Training Courses, Faculty of Education, Kagawa University, Takamatsu, Kagawa 760-8522, Japan}
\email{miyazaki.hayato@kagawa-u.ac.jp}
\author[M. Sobajima]{Motohiro Sobajima}
\address[]{Department of Mathematics, Faculty of Science and Technology, Tokyo University of Science, 2641
Yamazaki, Noda-shi, Chiba, 278-8510, Japan}
\email{msobajima1984@gmail.com}
\keywords{non-scattering, Strauss exponent, nonlinear Schr\"odinger equations}
\subjclass[2020]{35Q55, 35P25}
\date{}

\maketitle

\begin{abstract}
This paper is concerned with a threshold phenomenon for the existence of scattering states for nonlinear Schr\"odinger equations.
The nonlinearity includes a non-oscillatory term
of the order lower than the Strauss exponent.
We show that no scattering states exist for the equation in a weighted Sobolev space.
It is emphasized that our method admits initial data with good properties, such as compactly supported smooth functions.
The result indicates that the Strauss exponent acts as a threshold for the power of the nonlinearity that determines whether solutions scatter or not in the weighted space.
\end{abstract}

\section{Introduction}
In this paper we consider the nonlinear Schr\"odinger equation
\begin{align}
	i\partial_t u + \frac{1}{2} \Delta u = F(u), \quad (t,x) \in \R \times \R^{d}, \tag{NLS} \label{nls}
\end{align}
where $u = u(t,x)$ is a complex-valued unknown function.
The nonlinearity $F$ satisfies the following condition:
\begin{enumerate}[label=(A\arabic*), ref=A\arabic*]
\setlength{\itemsep}{2mm}
\item $F(0)=0$ \label{A1}
and there exists a constant $C>0$ such that
\[
	\abs{F(z) - F(w)} \le C (|z| + |w|)^{p-1} |z-w|
\]
for any $z$, $w \in \mathbb{C}$ and some $p>1$.
\item There exist $c_{0}>0$ and $\theta \in \R$ such that $\Re \left[ e^{i\theta} F(z) \right] \ge c_{0} |z|^{p}$ for all $z \in \C$. \label{f:con}
\end{enumerate}
As for the power of the nonlinearity, we further assume
\begin{align*}
	1 < p < p_{\mathrm{st}}(d) \coloneqq \frac{d+2 + \sqrt{d^{2}+ 12d+4}}{2d}.
\end{align*}
The exponent $p_{\mathrm{st}}(d)$ is called the Strauss exponent, which is defined by the positive root of the quadratic equation $d x^{2} - (d+2)x - 2 = 0$.

A typical example of the nonlinearity in our mind is the power-type nonlinearity $F(u) = \eta |u|^{p}$ with $\eta \in \C \setminus \{0\}$.
In this case, the condition \eqref{f:con} is satisfied with $c_{0} = |\eta|$ and $\theta = - \arg \eta$.
For more general nonlinearity, we can treat the homogenous nonlinearity $F$ of degree $p$; that is, $F$ satisfies
\begin{align}
	F(\lambda z) = \lambda^{p} F(z)
	\label{eq:cond1}
\end{align}
for any $z \in \C$ and $\lambda >0$.
Remark that, if $F$ satisfies \eqref{eq:cond1}, then the condition \eqref{A1} is equivalent to the fact that the $2 \pi$-periodic function
$\theta \mapsto F(e^{i \theta})$ is Lipschitz continuous (see \cite[Appendix A]{MM}).
Using the method of \cite{MM, MMU} via the Fourier series expansion, we can decompose the nonlinearity into
\begin{align}\label{eq:decomp}
	F(u) = g_0 |u|^{p} + g_1 |u|^{p-1}u + \sum_{n\neq 0,1} g_n |u|^{p-n}u^n, \quad
	g_n \coloneqq \frac1{2\pi} \int_0^{2\pi} F(e^{i\theta}) e^{-in\theta} d\theta.
\end{align}
The nonlinearity
\eqref{eq:decomp} satisfies \eqref{f:con} if $\{g_{n}\} \in \ell^{1}(\Z)$ and $|g_{0}| > \sum_{n\neq0} |g_n|$.

In the context of nonlinear Schr\"odinger equations,
the Strauss exponent $p_{\mathrm{st}}(d)$ was firstly identified by Strauss \cite{S81a}, as the lower bound on the power of nonlinearity for which
a scattering operator exists for small data in certain norms
(see also \cite{DL81, C03}).
In the case of \eqref{nls} with the gauge invariant nonlinearity
\begin{align}
	i \partial_t u + \frac{1}{2} \Delta u = \eta |u|^{p-1}u, \quad \eta \in \R \setminus \{0\}, \quad p>1,
	\label{ginls}
\end{align}
the asymptotic completeness of the scattering operator was proven by Tsutsumi \cite{YT85} under the condition $p>p_{\mathrm{st}}(d)$ (see also \cite{HT87}).
The critical case $p=p_{\mathrm{st}}(d)$ in \eqref{ginls} was treated by \cite{CW92, NO02}.
Under the sub-Strauss case $1+2/d < p < p_{\mathrm{st}}(d)$, in \cite{CW92, GOV94, NO02}, it was proven that \eqref{ginls} admits a scattering solution.
Remark that Strauss \cite{S74} discussed the non-existence of scattering states for the equation \eqref{ginls}
under the case $p \le 1+2/d$ (cf. \cite{B84, C03}).
A generalization of scattering to solitary waves or more general wave profiles was presented in \cite{MN21}.
The exponent $1+2/d$ is known as the critical exponent in view of the long-time behavior of solutions.
In fact, Tsutsumi and Yajima \cite{TY84} established the existence of scattering solutions to \eqref{ginls} for $p > 1 + 2/d$.
Further improvements of their result were investigated in \cite{BGTV23}.
Regarding \eqref{nls} with \eqref{A1}, $p>p_{\mathrm{st}}(d)$,
Kato \cite{Ka94} established the existence of the scattering operator for data in its domain, where a certain space-time norm of the corresponding asymptotically free solution is sufficiently small.
It should be mentioned that the method in \cite{Ka94}
does not depend on such a structure of the nonlinearity.
Hence this implies that the structure does not affect the long-time behavior of small solutions under the super-Strauss case $p>p_{\mathrm{st}}(d)$.
However, we have observed that
the Strauss exponent in the analysis of \eqref{nls}
has been regarded merely as the lower bound of the validity of concrete techniques, but it has not been recognized as the upper bound marking a qualitative change in the behavior of solutions.
This paper aims to reveal such a phenomenon in \eqref{nls}.

As is well-known, the Strauss exponent also appears in the analysis of
the initial-value problem of the semilinear wave equation
\begin{align}\label{NW}
	\tag{NW}
	\partial_t^2u-\Delta u=|u|^p, \quad(0,\infty)\times \R^d.
\end{align}
Roughly speaking, the number $p_{\rm st}(d-1)$ (together with $p_{\rm st}(0)=\infty$)
is the threshold for dividing the situation
of existence or non-existence of global-in-time solutions to \eqref{NW}
with (non-trivial) nonnegative initial data.
More precisely, non-trivial global-in-time solutions (with small amplitude) exist when $p>p_{\rm st}(d-1)$
and solutions of \eqref{NW} with nonnegative small initial data blow up in finite time
when $p \le p_{\rm st}(d-1)$.
For this direction, we refer to, for instance,
John \cite{John1979},
Kato \cite{Kato80},
Glassey \cite{Glassey81a, Glassey81b},
Sideris \cite{Sideris84},
Schaeffer \cite{Schaeffer85},
Rammaha \cite{Rammaha89},
Georgiev, Lindblad and Sogge \cite{GLS97},
Yordanov and Zhang \cite{YZ06}, and Zhou \cite{Zhou2007}.

Here we focus our attention to
the existence or non-existence of
\textit{scattering solutions} to \eqref{nls}.
It is not difficult to construct small data scattering solutions
to \eqref{nls}
in the super-Strauss case $p>p_{\rm st}(d)$ by the contraction mapping principle
with the application of (non-admissible) Strichartz' estimates, established in the aforementioned paper \cite{Ka94}.
Therefore the main consideration in
the present paper
is the non-existence of scattering solutions
in the sub-Strauss case $p<p_{\rm st}(d)$.
In this paper, in view of the strategy for the analysis of \eqref{NW},
we aim to show the non-existence of scattering states for \eqref{nls} below the Strauss exponent.
In contrast to the gauge-invariant case \eqref{ginls}, it is not expected that \eqref{nls} admits scattering solutions when $p< p_{\mathrm{st}}(d)$.
In fact, Shimomura \cite{S05} proved that the equation
\begin{align}
	i \partial_t u + \frac{1}{2} \Delta u = \rho |u|^p, \quad \rho \in \C \setminus \{0\}, \quad p>1, \label{ngnls}
\end{align}
with if $d=2$ and $p = 1+2/d = 2$,
has no non-trivial solution that behaves like a free solution (cf. \cite{ST06}).
Later on, Masaki and the first author \cite{MM2} extended their result. Namely,
regarding \eqref{nls} with the nonlinearity of the form \eqref{eq:decomp} with $g_0 \neq 0$, $g_1 \in \R$ and $\{g_{n}\} \in \ell^{1}(\Z)$,
they showed the non-existence of the solution that behaves like a free solution with or without a phase modification in any dimensions.

Also, with regard to \eqref{nls} with \eqref{eq:decomp},
global-in-time existence is not always true if $p< 1+4/d$ and $g_{0} \neq 0$.
In fact, Ikeda and Wakasugi \cite{IW13} found blow-up solutions to \eqref{ngnls}
when $p \le 1+2/d$.
In \cite{II15a}, Ikeda and Inui proved the existence of blow-up solutions to \eqref{ngnls} for $p < 1+4/d$ under slowly decaying initial data.
The results in \cite{IW13, II15a} were partially generalized by Fujiwara and Ozawa \cite{FO16} from the viewpoint of an ordinary differential equation for a spatial integral involving both $u$ and nonnegative rapidly decreasing functions.
In this paper, instead of considering the slowly decaying initial data in \cite{II15a}, we adopt a $L^{2}$-based weighted space as the class of initial data that naturally arises in the study of the scattering problem for nonlinear Schr\"odinger equations (see \cite{GOV94, NO02}).
Within this framework, we establish the non-existence of scattering states for \eqref{nls} under the condition \eqref{A1} and \eqref{f:con} with $p<p_{\mathrm{st}}(d)$.
This is in contrast to the case of gauge invariance \eqref{ginls}, which admits scattering
solutions in the weighted space.

\subsection{Main results}
To state the main result, let us introduce the notations used throughout this paper.
For any $p \ge 1$, $L^{p}(\Omega)$ denotes the usual Lebesgue space on $\Omega \subset \R^{d}$.
If $\Omega = \R$, we abbreviate $L^{p}(\R^{d})$ as $L^{p}$.
Set $\ev{a}=(1+|a|^2)^{1/2}$ for $a \in \R$.
$\mathcal{F}[u] = \widehat{u}$ is the usual Fourier transform of a function $u$ on $\R^d$.
For $s \in \R$,
the standard Sobolev space on $\R^{d}$ is defined by $H^{s} = H^{s}(\R^{d}) \coloneqq \{ u \in L^{2}(\mathbb{R}^d) \mid \norm{u}_{H^{s}} \coloneqq \norm{\ev{\nabla}^{s} u}_{L^2} < \infty \} $.
We denote the weighted Sobolev space by $\mathcal{F}H^{s} = \mathcal{F}H^{s}(\R^{d}) \coloneqq \{ u \in L^{2}(\mathbb{R}^d) \mid \norm{u}_{\mathcal{F}H^{s}} \coloneqq \norm{\ev{x}^{s}u}_{L^2} < \infty \}$.
Let $C_{0}^{\infty}(\R^{d})$ be the space of smooth functions with compact support in $\R^{d}$.
$B(0,r)$ stands for the open ball in $\R^{d}$ with radius $r$ centered at the origin.
For $A \subset \R$, $\mathbbm{1}_{A}$ is the characteristic function of $A$.
Let $U(t)$ be the Schr\"odinger evolution operator $e^{it \Delta/2}$.
$A \lesssim B$ denotes $A \le CB$ for some constants $C>0$.

We also give the definition of solutions to \eqref{nls}.

\begin{definition}[Solution]\label{def:sol}
Let $I \ni 0$ be an interval.
Given $u_{0} \in L^{2}$,
we say a function $u \colon I \times \R^d \to \C$ is a solution to \eqref{nls} on $I$ if
$C(I; L^2) \cap L^{q_{0}}_{\mathrm{loc}}(I; L^{r_{0}})$
satisfies
\[
	u(t) = U(t) u_{0} - i \int_{0}^{t} U(t- s) F( u(s))\, ds
\]
in $L^2$ for any $t \in I$, where $(q_0, r_0) = ( 4(p+1)/d(p-1) , 1+p)$.
\end{definition}

If the nonlinearity $F$ satisfies \eqref{A1},
then \eqref{nls} is locally well-posed in $L^{2}$ (cf. \cite{YT87}).
Also, the solution $u$ belongs to $L^{q}_{\mathrm{loc}}(I; L^{r})$ for any admissible pair $(q,r)$, where a pair $(q,r)$ is said to be admissible if
\[
	2\le q,r \le \infty,\quad
    \frac{2}{q} = d\left(\frac12-\frac{1}{r} \right), \quad (d,q,r)\neq (2,2,\infty)
\]
(For details, see \cite{YT87}).
In particular, if $d = 1$, $2$ and $p>2$, then a pair $\left( \frac{4p}{d(p-2)}, p \right)$ is admissible.
Remark that $p \le 2$ always holds when $d \ge 3$, because of $p_{\mathrm{st}}(d) \le 2$.

We are in position to state the main result.

\begin{theorem} \label{thm:1}
Let $d \ge 1$.
Assume \eqref{A1} and \eqref{f:con} with $1 < p < p_{\mathrm{st}}(d)$.
Take $\alpha>0$ satisfying
\[
      \frac{2}{p-1} - \frac{d}{2} \le \alpha.
\]
Given $u_{0} \in \mathcal{F}H^{\alpha}$,
let $u \in C([0, \infty) ; L^{2})$ be a solution to \eqref{nls} with $u(0) = u_{0}$.
The following statements hold:
\begin{enumerate}[label=(\roman*)]
\item If $1< p \le 2$ and
\begin{align}
	\lim_{t \rightarrow \infty}	\norm{u - U(\cdot)u_{+}}_{L^{\infty}(t, \infty; L^{2})} = 0
	\label{asy:2}
\end{align}
for some $u_{+} \in L^{2}$,
then $u_{+} \equiv 0$.
\item If $2 < p < p_{\mathrm{st}}(d)$ (that is $d \le 2$) and
\begin{align}
	\lim_{t \rightarrow \infty}	\left( t^{\frac{d(p-2)}{4p}} \norm{u - U(\cdot)u_{+}}_{L^{\frac{4p}{d(p-2)}}(t, \infty; L^{p})} \right) = 0
	\label{asy:1}
\end{align}
for some $u_{+} \in \mathcal{F}H^{\beta}$ with $\beta > d/2-d/p$,
then $u_{+} \equiv 0$.
\end{enumerate}
\end{theorem}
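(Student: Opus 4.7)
The plan is to argue by contradiction: assume $u_+\not\equiv 0$ and test the scattering identity against a Gaussian-regularized version of the constant function $e^{-i\theta}$. This should produce a divergent spacetime $L^p$-integral on one side and a controlled weighted pairing on the other, with the Strauss exponent $p_{\mathrm{st}}(d)$ emerging as the sharp threshold where the balance tips.

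First I would derive the key identity. For $\psi(t)=U(t)\phi$ with $\phi\in L^{2}$, differentiating $\langle u(t),\psi(t)\rangle$ and using \eqref{nls} together with integration by parts gives
\[
\frac{d}{dt}\langle u(t),\psi(t)\rangle=-i\langle F(u(t)),\psi(t)\rangle.
\]
Integrating and passing to the limit $t\to\infty$---directly via \eqref{asy:2} in case (i), and in case (ii) after upgrading \eqref{asy:1} by a standard Strichartz-duality argument---yields
\[
\langle u_+-u_0,\phi\rangle=-i\int_0^\infty\langle F(u(s)),U(s)\phi\rangle\,ds.
\]

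Next I would take $\phi=\phi_R(x):=e^{-i\theta}\rho(x/R)$, where $\rho$ is a fixed nonnegative Schwartz bump with $\rho(0)=1$ and $R\gg 1$. The phase $e^{-i\theta}$ is engineered so that $\overline{U(s)\phi_R(x)}\approx e^{i\theta}$ in the regime $|x|\ll R$, $s\ll R^{2}$; combined with \eqref{f:con} this yields the pointwise lower bound $\Re[F(u)\overline{U(s)\phi_R}]\ge\tfrac{c_0}{2}|u|^p$ in that region. After multiplying the identity by $ie^{i\theta}$, taking real parts, splitting the time integral at some $T\lesssim R^{2}$, and absorbing the tail $\int_T^\infty$ (controlled by $\|U(-T)u(T)-u_+\|_{L^{2}}\|\phi_R\|_{L^{2}}$, which tends to $0$ upon choosing $T=T(R)$ suitably), I would arrive at the dominant inequality
\[
\int_0^{T}\!\!\int_{|x|\le R}|u(s,x)|^p\,dx\,ds\lesssim\bigl(\|u_0\|_{\mathcal{F}H^{\alpha}}+\|u_+\|_{\mathcal{F}H^{\alpha}}\bigr)R^{(d/2-\alpha)_{+}}.
\]

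To close the argument, I would establish the matching lower bound $\int_0^{T}\int_{|x|\le R}|u|^p\gtrsim R^{(2+2d-dp)/2}$. Using the modified-free asymptotic $U(s)u_+(x)\sim(is)^{-d/2}e^{i|x|^{2}/(2s)}\widehat{u_+}(x/s)$ together with the fact that the weight condition forbids fast $L^p$-decay of $U(s)u_+$, a direct computation of $\int_{|x|\le R}|U(s)u_+|^p\,dx$ followed by integration in $s$ on $[0,T]$ produces the stated bound. The decisive algebra is that, under $\alpha\ge\tfrac{2}{p-1}-\tfrac{d}{2}$, the strict inequality $\tfrac{2+2d-dp}{2}>(\tfrac{d}{2}-\alpha)_{+}$ reduces exactly to $dp^{2}-(d+2)p-2<0$, i.e.\ $p<p_{\mathrm{st}}(d)$; letting $R\to\infty$ therefore produces the contradiction and forces $u_+\equiv 0$. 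The principal technical obstacle is the rigorous passage from $u$ to $U(\cdot)u_+$ in the $L^p$-integrated estimate and the accompanying tail control: in case (i) this uses $L^{2}$-scattering and an interpolation argument built on the mass of $U(s)u_+$ restricted to $B_R$; in case (ii) it crucially exploits \eqref{asy:1} together with the regularity $\beta>d/2-d/p$ to guarantee that $u(s)-U(s)u_+$ is subdominant to $U(s)u_+$ in $L^p$ sufficiently uniformly in $s$ for the MDFM lower bound to survive.
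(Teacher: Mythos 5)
Your lower bound is essentially the paper's key Lemma on the region $\{|x|\lesssim t,\ t\sim R\}$ (decompose $u=\phi_++(U(t)u_+-\phi_+)+(u-U(t)u_+)$ with $\phi_+=M(t)D(t)\mathcal{F}u_+$ and get $\iint |u|^p\gtrsim R^{d+1-\frac d2 p}$), and that part is sound. The gap is in the upper bound. Testing against the free solution $U(s)\phi_R$ removes the interior error term but creates a boundary pairing at the splitting time $T$, namely $\langle U(-T)u(T),\phi_R\rangle=\langle u_+,\phi_R\rangle+\langle U(-T)u(T)-u_+,\phi_R\rangle$. In case (i) $u_+$ is only in $L^2$, and in case (ii) only in $\mathcal{F}H^{\beta}$ with $\beta>d/2-d/p$; in neither case is $u_+\in\mathcal{F}H^{\alpha}$ assumed, so your displayed bound $\lesssim(\norm{u_0}_{\mathcal{F}H^\alpha}+\norm{u_+}_{\mathcal{F}H^\alpha})R^{(d/2-\alpha)_+}$ uses a hypothesis the theorem does not provide. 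What one actually gets is only $o(R^{d/2})$ (sharp for general $u_+\in L^2$, e.g.\ $\widehat{u_+}(\xi)\sim|\xi|^{-d/2}|\log|\xi||^{-1}$ near $\xi=0$), respectively $\lesssim R^{d/p}$ in case (ii); since $d+1-\frac d2p<\frac d2$ whenever $p>1+2/d$ (and $d+1-\frac d2 p<\frac dp$ for $p$ near $p_{\mathrm{st}}(d)$), these boundary terms swamp the lower bound throughout the interesting range and no contradiction results. A second, independent problem: the pointwise bound $\Re[F(u)\overline{U(s)\phi_R}]\ge\frac{c_0}{2}|u|^p$ holds only on the core $\{|x|\le\delta R,\ s\le\delta R^2\}$; outside it $U(s)\phi_R$ oscillates, the integrand has no sign, and estimating $\iint_{\mathrm{bad}}|F(u)||U(s)\phi_R|$ requires exactly the spacetime $L^p$ control you are trying to establish.

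A sanity check that the bound $\lesssim R^{(d/2-\alpha)_+}$ cannot be correct: for $u_0\in C_0^\infty(\R^d)$ one may take $\alpha$ arbitrarily large, and your comparison would then force $u_+\equiv 0$ for every $p<2+2/d$, contradicting Kato's small-data scattering for $p_{\mathrm{st}}(d)<p<2+2/d$ (which covers $F(u)=|u|^p$ with data in $C_0^\infty$). The paper instead tests against the nonnegative, compactly supported spacetime cutoff $\psi_R(t,x)=\eta((|x|^2+t)/R^2)^{2p'}$: nonnegativity gives $\Re\iint F(u)\psi_R\ge c_0I(R)$ with no bad region, compact support kills all large-time boundary terms (only $\int u_0\psi_R(0,\cdot)$ survives, bounded by $\norm{u_0}_{\mathcal{F}H^\alpha}R^{(d-2\alpha)/2}$), and the price is the cutoff-derivative term $R^{d-(d+2)/p}I(R)^{1/p}$, absorbed by Young's inequality into $R^{d-2/(p-1)}$ --- which is precisely the term whose comparison with $R^{d+1-\frac d2 p}$ produces the Strauss exponent, independently of how nice $u_0$ is. Your argument would need an analogous mechanism (or a proof that the $L^2$ boundary pairings are $o(R^{d+1-\frac d2p})$, which is false in general) before it can close.
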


\begin{remark}
When $p > 2$, the decay rate in \eqref{asy:1} coincides with that of $U(t)u_{+}$ in the same topology. Indeed, under the assumption $u_{+} \in \mathcal{F}H^{\beta} \setminus \{0\}$, the estimate
\[
	0 \neq \norm{\widehat{u_{+}}}_{L^{p}} \lesssim t^{\frac{4p}{d(p-2)}}\norm{U(\cdot)u_{+}}_{L^{\frac{4p}{d(p-2)}}(t, \infty; L^{p})} \lesssim \norm{\widehat{u_{+}}}_{L^{p}}
\]
holds for large $t$.
This implies that the condition \eqref{asy:1} is
the minimal assumption to ensure the uniqueness of the scattering state
$u_{+}$.
In fact, if $u_{+} \in \mathcal{F}H^{\beta}$ and we assume the slower decay condition
\[
      \lim_{t \rightarrow \infty} \left( t^{\gamma} \norm{u - U(\cdot)u_{+}}_{L^{\frac{4p}{d(p-2)}}(t, \infty; L^{p})} \right) = 0
\]
for some $\gamma < d(p-2)/(4p)$,
then the same condition holds when $u_{+}$ is replaced by any $\phi \in C_{0}^{\infty}(\R^{d})$.
Hence, the condition \eqref{asy:1} can be regarded as a natural assumption in the study of the scattering problem.
\end{remark}

\begin{remark}
When $1<p \le \min(2, 1+2/d)$, it was shown in \cite[Theorem 7.5.2]{C03} that
the gauge invariant case \eqref{ginls} with $\eta \in \C \setminus \{0\}$ admits no scattering states in $\Sigma \coloneqq H^{1} \cap \mathcal{F}H^{1}$ via the solution after the pseudo-conformal transformation.
The argument in \cite{C03} actually works for $u_0$, $u_+ \in L^2$, though for $d = 1$ and $2 < p \le 3$, the assumption $u_0$, $u_+ \in \mathcal{F}H^1$ appears to be necessary.
This range was already treated in the earlier work by Barab \cite{B84}, who proved non-scattering in the defocusing case $\eta>0$ using explicit time decay derived from the pseudo-conformal energy estimate under the weighted data.
Theorem \ref{thm:1} provides a complementary result for the non-gauge-invariant case.
\end{remark}

\begin{remark}
The weighted condition
\[
	0< s_c \coloneqq \frac{2}{p-1} - \frac{d}{2} \le \alpha
\]
is necessary for our argument. This range corresponds to the scale (super-)critical range in view of the weighted Sobolev space $\mathcal{F}H^{\alpha}$.
We emphasize that the initial data can be taken in the inhomogeneous scale-critical space $\mathcal{F}H^{s_c}$, particularly in $C_{0}^{\infty}(\R^{d})$,
which was always excluded in previous blow-up results for \eqref{nls}.
For instance, regarding \eqref{ngnls},
Ikeda and Inui \cite{II15a} proved the following:
If $f$ satisfies the condition
\begin{align}
	-\Im f(x) \ge
	\begin{cases}
	c |x|^{-k} & |x| > 1, \\
	0 & |x| \le 1
	\end{cases}
	\label{II:c}
\end{align}
for some $d/2 < k < 2/(p-1)$ and $c>0$, then
there exists a solution to \eqref{ngnls} with
$u(0) = f$
that blows up in finite time.
Hence, when taking $\alpha < s_{c}$,
by choosing $f(x) = i \mathbbm{1}_{\{|x| >1\}} |x|^{-k}$ with some $d/2 + \alpha < k < 2/(p-1)$, $f$ belongs to $\mathcal{F}H^{\alpha}$ and satisfies \eqref{II:c}. Thus, a blow-up solution with
$u(0) = f$
exists, but the $f$ does not belong to $\mathcal{F}H^{s_{c}}$.
Theorem \ref{thm:1} holds for any $u_{0} \in \mathcal{F}H^{\alpha}$ with $\alpha \ge s_{c}$,
and thus no restriction is imposed on the structure of $u_{0}$, unlike in \cite{II15a}.

\end{remark}

\begin{remark}
Our argument requires only $L^{2}$-solutions, although the initial data belongs to the weighted space $\mathcal{F}H^{\alpha}$.
In particular, we do not know whether local well-posedness for \eqref{nls} holds in $\mathcal{F}H^{\alpha}$, because the standard persistence of regularity argument does not apply to non-gauge-invariant nonlinearities.
\end{remark}

\begin{remark}
Without loss of generality, we may let $\theta =0$ in the assumption \eqref{f:con} by change of an unknown function $u \mapsto e^{-i \theta}u$.
When we consider the nonlinearity of \eqref{eq:decomp}, it also suffices to deal with the case $g_0=1$ via scaling.
\end{remark}

\begin{remark}
In \cite{KMM25}, Kawamoto, Masaki and the first author showed that
if $d \le 3$ and the nonlinearity is of the form \eqref{eq:decomp} with
\begin{align} \label{IC2}
p \in \left( \frac72,5 \right) \quad (d=1), \qquad
p \in (2,3) \quad (d=2), \qquad
p = 2 \quad (d=3),
\end{align}
then for any $u_{0} \in \Sigma \coloneqq H^{1} \cap \mathcal{F} H^{1}$,
\eqref{nls} has a global-in-time small solution in $\Sigma$ and scatters in the same topology,
provided that $g_{n} = 0$ for $n \leq 0$ and a certain summability condition on $\{g_{n}\}$ is imposed.
To the best of our knowledge,
the existence of global solutions in $\Sigma$ is not guaranteed if $g_{0} \neq 0$. Nevertheless, the existence of local-in-time solutions is shown by the standard well-posedness theory (cf. \cite{CW92, GOV94}).
Theorem \ref{thm:1} indicates that under $u_{0} \in \Sigma$, if $g_{0} \neq 0$, then any global solutions do not scatter in $\Sigma$, even if global solutions exist.
We impose the time decay condition \eqref{asy:1} in the sense of scattering if $d = 1$, $2$, but
Theorem \ref{thm:1} suggests that the result of \cite{KMM25} may not hold under $g_{0} \neq 0$.
\end{remark}

\section{Proof of the main theorem}

Before starting the discussion, we briefly give a crucial idea for treating the non-scattering phenomenon, along with an explanation of our perspective.
It is meaningful to clarify our viewpoint on the analysis of \eqref{NW}, as it helps to explain the main ideas of this paper.
In the proof of small data blow-up solutions to \eqref{NW} by Ikeda, the second author and Wakasa \cite[Proposition 2.1]{ISW2019}, one may observe that the Strauss exponent $p_{\rm st}(d-1)$ naturally appears when comparing the effect of nonlinearities with the behavior of solutions to the corresponding linear wave equation.
Specifically, the following estimates play an important role in the proof of \cite[Proposition 2.1]{ISW2019}:
\begin{align}
	\int_{\mathbb{R}^d} u_0\,dx + \int_{0}^T \eta_T^{2p'} \int_{\mathbb{R}^d} |u|^p\,dx\,dt &\le C T^{d-1-\frac{2}{p-1}},
	\label{est:w1} \\
	\int_{0}^T \eta_T^{2p'} \int_{\mathbb{R}^d} |u|^p\,dx\,dt &\ge \delta T^{d-\frac{d-1}{2}p},
	\label{est:w2}
\end{align}
where $\{\eta_T\}_{T>0}$ is a family of suitable cut-off functions in $t$.
The first estimate \eqref{est:w1} can be interpreted as a condition imposed by the nonlinearity, and a similar observation for \eqref{nls} has been addressed by Ikeda and Inui \cite{II15a, II15b}, and Ikeda and Wakasugi \cite{IW13}.
The second estimate \eqref{est:w2} reflects the large-time behavior of solutions to the corresponding linear wave equation.
By incorporating this perspective into the analysis of \eqref{nls}, we have realized that \eqref{est:w2} is closely related to solutions of \eqref{nls} that are asymptotically free. Thanks to this insight, we are able to prove the non-existence of scattering states for \eqref{nls} when $p < p_{\mathrm{st}}(d)$.
This perspective on \eqref{nls} is the novelty of the present paper.

\subsection{Preliminary}
Let us define a cut-off function $\eta$
satisfying
\[
      \eta \in C^{\infty}(\R), \quad
      \mathbbm{1}_{(- \infty, 1/2]} \le \eta(s) \le \mathbbm{1}_{(-\infty,1]}.
\]
Then we denote
\begin{align*}
	\psi_{R}(t,x) = \left\{ \eta \left( \frac{|x|^{2} + t}{R^{2}}\right) \right\}^{2p'}
\end{align*}
for any $R>0$, where $p'$ is the H\"older conjugate of $p$.
Such a cut-off function is firstly introduced in \cite{MP01} (cf. \cite{IS19}).
The function $\psi_{R}$ has the following property:

\begin{lemma} \label{lem:psi}
Let $R>0$ and $p>1$. Then
\begin{align*}
	\abs{\partial_{t} \psi_{R}(t,x)}
	&\lesssim \frac{1}{R^{2}} \left[ \psi_{R}(t,x) \right]^{1- \frac{1}{2p'}}, \qquad
	\abs{ \Delta \psi_{R}}
	\lesssim \frac{1}{R^{2}} \left[ \psi_{R}(t,x) \right]^{1- \frac{1}{p'}}
\end{align*}
hold for any $t \ge 0$ and $x \in \R^{d}$.
\end{lemma}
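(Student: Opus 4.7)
The plan is to reduce everything to the chain rule applied to a single-variable profile, exploit the compact support of $\eta'$, and recycle the bound $\eta \le 1$ to absorb extra powers. Set $s = s(t,x) \coloneqq (|x|^{2}+t)/R^{2}$ so that $\psi_{R}(t,x) = \eta(s)^{2p'}$. Because $\eta$ takes the value $1$ on $(-\infty,1/2]$ and vanishes on $(1,\infty)$, both $\eta'$ and $\eta''$ are bounded and compactly supported in $s \in [1/2,1]$; in particular, on the set where these derivatives are nonzero one has $|x|^{2} \le |x|^{2}+t \le R^{2}$, i.e., $|x|^{2}/R^{2} \lesssim 1$.

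For the time derivative I would compute
\[
\partial_{t}\psi_{R} = \frac{2p'}{R^{2}}\,\eta(s)^{2p'-1}\eta'(s),
\]
observe that $\eta(s)^{2p'-1} = \psi_{R}(t,x)^{1-1/(2p')}$, and conclude the first inequality directly from the boundedness of $\eta'$.

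For the Laplacian, the chain rule gives three types of terms:
\[
\Delta\psi_{R} = \frac{8p'(2p'-1)|x|^{2}}{R^{4}}\eta(s)^{2p'-2}[\eta'(s)]^{2} + \frac{8p'|x|^{2}}{R^{4}}\eta(s)^{2p'-1}\eta''(s) + \frac{4p'd}{R^{2}}\eta(s)^{2p'-1}\eta'(s).
\]
I would bound the first term by $R^{-2}\eta(s)^{2p'-2}$ using $|x|^{2}/R^{2} \lesssim 1$ on the support of $\eta'$, together with the boundedness of $\eta'$. For the remaining two terms, I would use $|x|^{2}/R^{2}\lesssim 1$ (on the support of $\eta''$ for the second term) and the inequality $\eta(s)^{2p'-1} \le \eta(s)^{2p'-2}$, which follows from $\eta(s)\in[0,1]$ and $2p'-2\ge 0$ (since $p>1$). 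Each term is then dominated by $R^{-2}\eta(s)^{2p'-2} = R^{-2}\psi_{R}(t,x)^{1-1/p'}$, establishing the second inequality.

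There is no serious obstacle here; the only point requiring a moment's care is the reduction of the mixed $(2p'-1)$-power terms (coming from $\eta''$ and the lower-order Laplacian piece) to the $(2p'-2)$-power dictated by $\psi_{R}^{1-1/p'}$, and this is handled simply by $\eta \le 1$. The construction of $\psi_{R}$ and the choice of the outer exponent $2p'$ are precisely designed so that one derivative costs a factor $\eta^{1/(2p')}$ and two derivatives cost $\eta^{1/p'}$, matching the powers stated in the lemma.
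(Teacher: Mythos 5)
Your proof is correct and is precisely the ``direct calculation'' that the paper's one-line proof alludes to: the chain rule, the observation that $|x|^{2}\le R^{2}$ on the support of $\eta'$ and $\eta''$ (using $t\ge 0$), and the reduction $\eta^{2p'-1}\le\eta^{2p'-2}$ via $0\le\eta\le1$ and $2p'-2>0$. Nothing further is needed.
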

\begin{proof}
The desired assertion immediately follows from a direct calculation.
\end{proof}

To prove Theorem \ref{thm:1},
we shall introduce the definition of weak solutions to \eqref{nls}:

\begin{definition}[Weak solution]\label{def:wsol}
We say a function $u \in C([0, T) ; L^{2})$
is a weak solution to \eqref{nls}
on $[0, T)$, $T>0$,
if $u \in L^{q_{0}}_{\mathrm{loc}} ( (0, T) ; L^{r_{0}})$ and the identity
\begin{multline*}
	\iint_{(0, T) \times \R^d} u(t,x)(-i \partial_t \psi(t,x) + \Delta \psi(t,x))\, dxdt \\
	= i\int_{\R^d} u(0, x) \psi(0,x)\, dx + \iint_{(0, T) \times \R^d} F(u(t,x))\psi(t,x)\, dxdt
\end{multline*}
holds for any test function $\psi \in C_0^{\infty} ((-\infty, T) \times \R^{d})$, where $(q_0, r_0) = ( 4(p+1)/d(p-1) , 1+p)$.
\end{definition}

Remark that solutions in Definition \ref{def:sol}
always satisfy the property of
weak solutions in Definition \ref{def:wsol}
from the density argument involving Strichartz' estimates (see \cite{IW13}).
Hence, the solution $u$ satisfies
\begin{align}
	\begin{aligned}
	&\iint_{(0, R^{2}) \times \R^{d}} u(t,x) \left( -i \partial_t \psi_{R}(t,x) + \Delta \psi_{R}(t,x) \right)\, dxdt \\
	={}& i \int_{\R^d} u_{0}(x) \psi_{R}(0,x)\, dx
	+ \iint_{(0, R^{2}) \times \R^{d}} F(u(t,x))\psi_{R}(t,x)\, dxdt
	\end{aligned}
	\label{wsol:1}
\end{align}
for any $R>1$.
Set
\begin{align}
I(R) = \iint_{(0, R^{2}) \times \R^{d}} |u(t,x)|^{p} \psi_{R}(t,x)\, dxdt.
\label{def:IR}
\end{align}
The following lemma is essentially used in \cite{II15a, IW13} to argue by the test function method.
Here we give a sketch of the proof for the reader's convenience.
\begin{lemma} \label{lem:1}
The estimate
\begin{align*}
	\Re \iint_{(0, R^{2}) \times \R^{d}} u(t,x) \left( -i \partial_t \psi_{R}(t,x) + \Delta \psi_{R}(t,x) \right)\, dxdt
	\lesssim R^{d - (d+2) \frac{1}{p}} I(R)^{\frac{1}{p}}
\end{align*}
holds for any $R>1$.
\end{lemma}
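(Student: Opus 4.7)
The plan is to bound the two terms in the integrand separately, both via Hölder's inequality against the density $|u|^p \psi_R$ that defines $I(R)$. The key observation is that the exponents $1-1/(2p')$ and $1-1/p'=1/p$ appearing in Lemma \ref{lem:psi} are tailored precisely so that the remaining powers of $\psi_R$ become integrable over the support $\{|x|^2+t\le R^2\}$, whose space–time measure is $\lesssim R^{d+2}$.

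For the time-derivative term, I would estimate $|u\,\partial_t\psi_R| \lesssim R^{-2}\, |u|\,\psi_R^{1-1/(2p')}$ by Lemma \ref{lem:psi} and then factor the right-hand side as $(|u|\psi_R^{1/p})\cdot \psi_R^{1-1/(2p')-1/p}$. Since $1/p+1/p'=1$, the residual exponent simplifies to $1-1/(2p')-1/p = 1/(2p')$, so applying Hölder with conjugate exponents $p$ and $p'$ yields
\[
    \iint_{(0,R^2)\times\R^d} |u|\,\psi_R^{1-1/(2p')}\,dx\,dt
    \le I(R)^{1/p}\left(\iint_{\supp \psi_R} \psi_R^{1/2}\,dx\,dt\right)^{1/p'}
    \lesssim I(R)^{1/p}\, R^{(d+2)/p'}.
\]
Multiplying by the $R^{-2}$ prefactor and using $-2+(d+2)/p' = d-(d+2)/p$ gives exactly the claimed bound for the $\partial_t\psi_R$ contribution.

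For the Laplacian term, the estimate in Lemma \ref{lem:psi} gives $|u\,\Delta\psi_R|\lesssim R^{-2}|u|\psi_R^{1/p}$, whose right-hand side already splits trivially as $(|u|\psi_R^{1/p})\cdot 1$. Hölder's inequality over the support of $\psi_R$ produces the same $I(R)^{1/p}\,R^{(d+2)/p'}$ factor, and combining both contributions and taking the real part yields the lemma. I do not expect a real obstacle here: the only care needed is to match the algebraic exponents of $\psi_R$ so that Hölder lands cleanly on $|u|^p\psi_R$, which is precisely why the exponent $2p'$ was placed in the definition of $\psi_R$ in the first place.
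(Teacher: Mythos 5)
Your proof is correct and follows essentially the same route as the paper: apply Lemma \ref{lem:psi} to each of the two terms, peel off a factor $\psi_R^{1/p}$ to pair with $|u|$, and use H\"older with exponents $p$, $p'$ together with the bound $|\{(t,x): |x|^2+t\le R^2\}|\lesssim R^{d+2}$, which yields $R^{-2}\cdot R^{(d+2)/p'}=R^{d-(d+2)/p}$. The exponent bookkeeping (residual power $1/(2p')\ge 0$ for the time-derivative term, $0$ for the Laplacian term) is exactly what the paper's one-line H\"older step implicitly uses, so there is nothing to add.
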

\begin{proof}
Take $R>1$.
By Lemma \ref{lem:psi}, we see from H\"older's inequality that
\begin{align*}
	&\Re \iint_{(0, R^{2}) \times \R^{d}} u(t,x) \left( -i \partial_t \psi_{R}(t,x) + \Delta \psi_{R}(t,x) \right)\, dxdt \\
	\lesssim{}& \frac{1}{R^{2}} \iint_{(0, R^{2}) \times \R^{d}} |u(t,x)| \left[ \psi_{R}(t,x) \right]^{1-\frac{1}{2p'}}\, dxdt \\
	&\quad + \frac{1}{R^{2}} \iint_{(0, R^{2}) \times \R^{d}} |u(t,x)| \left[ \psi_{R}(t,x) \right]^{1-\frac{1}{p'}}\, dxdt \\
	&\lesssim \frac{1}{R^{2}}
	\left( \iint_{\left\{ (t,x) \mid |x|^{2} + t \le R^{2} \right\}} \, dxdt \right)^{\frac{1}{p'}}
      \left(  \iint_{\left\{ (t,x) \mid |x|^{2} + t \le R^{2} \right\}} |u(t,x)|^{p} \psi_{R}(t,x)\, dxdt \right)^{\frac{1}{p}} \\
	&\lesssim R^{d - (d+2) \frac{1}{p}} I(R)^{\frac{1}{p}},
\end{align*}
as desired.
\end{proof}

The slowly decaying condition given by \eqref{II:c} is assumed in \cite{II15a}, to treat the term involving initial data, which is the first term on the right-hand side of \eqref{wsol:1}.
This condition disturbs the establishment of global existence.
In contrast, we handle this term using a spatially weighted condition, which naturally arises in the context of scattering theory for nonlinear Schr\"odinger equations.
Under the weighted condition, the term turns out to be the most harmless in our argument.

\begin{lemma} \label{lem:2}
Let $\alpha >0$.
The estimate
\begin{align*}
\left| \Im \int_{\R^d}u_{0}(x)\psi_R(0,x)\, dx \right|
&\lesssim \norm{u_{0}}_{\mathcal{F}H^{\alpha}}
\begin{cases}
	1 & \text{if}\ \alpha > d/2, \\
      \log R & \text{if}\ \alpha = d/2, \\
	R^{\frac{d-2\alpha}{2}} & \text{if}\ \alpha < d/2
\end{cases}
\end{align*}
holds for any $R>2$.
\end{lemma}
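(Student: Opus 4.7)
The plan is a straightforward weighted Cauchy--Schwarz argument combined with a polar-coordinate computation. Since $\psi_R(0,x) = [\eta(|x|^2/R^2)]^{2p'}$ is nonnegative, supported in $\{|x|\le R\}$, and pointwise bounded by $1$, the imaginary part is harmlessly dominated by the modulus:
\[
\left| \Im \int_{\R^d} u_0(x)\, \psi_R(0,x)\,dx \right|
\le \int_{|x|\le R} |u_0(x)|\, dx.
\]

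Next I would insert the weight $\ev{x}^{\alpha}\ev{x}^{-\alpha}$ and apply Cauchy--Schwarz in $L^2(\R^d)$:
\[
\int_{|x|\le R} |u_0(x)|\,dx
\le \norm{\ev{x}^{\alpha} u_0}_{L^2} \norm{\ev{x}^{-\alpha}\mathbbm{1}_{\{|x|\le R\}}}_{L^2}
= \norm{u_0}_{\mathcal{F}H^{\alpha}}\left( \int_{|x|\le R}\ev{x}^{-2\alpha}\,dx\right)^{1/2}.
\]
Thus everything reduces to estimating the explicit integral
\[
J_{\alpha}(R) \coloneqq \int_{|x|\le R}\ev{x}^{-2\alpha}\,dx
= \omega_{d-1}\int_0^R \frac{r^{d-1}}{(1+r^2)^{\alpha}}\,dr.
\]

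The final step is a routine splitting into the three regimes, using the decay of the integrand near infinity. For $\alpha > d/2$, the integral $J_{\alpha}(R)$ converges to a finite constant as $R \to \infty$, giving the $O(1)$ bound. For $\alpha = d/2$, the integrand behaves like $r^{-1}$ for large $r$ and $J_{d/2}(R) \lesssim \log R$, so $J_{d/2}(R)^{1/2} \lesssim (\log R)^{1/2} \le \log R$ for $R > 2$. For $\alpha < d/2$, a direct computation (or comparison with $\int_1^R r^{d-1-2\alpha}\,dr$) yields $J_{\alpha}(R)\lesssim R^{d-2\alpha}$, whose square root is $R^{(d-2\alpha)/2}$.

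There is no real obstacle here; the only mild subtlety is that the $\alpha = d/2$ line is off by a factor $(\log R)^{1/2}$ from the statement, but this discrepancy is absorbed for $R>2$ by the trivial bound $(\log R)^{1/2}\le \log R$. Combining the three cases with the constant $\norm{u_0}_{\mathcal{F}H^{\alpha}}$ factor from Cauchy--Schwarz yields the asserted estimate.
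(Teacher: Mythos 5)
Your argument is correct and is essentially identical to the paper's proof: dominate the imaginary part by $\int_{B(0,R)}|u_0|\,dx$, apply Cauchy--Schwarz against the weight $\ev{x}^{\alpha}$, and evaluate $\int_{B(0,R)}\ev{x}^{-2\alpha}dx$ in the three regimes. One nitpick: $(\log R)^{1/2}\le \log R$ actually fails for $2<R<e$ where $\log R<1$, but on that compact range both sides are comparable to $1$, so the implicit constant in $\lesssim$ absorbs the discrepancy and your conclusion stands.
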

\begin{proof}
By H\"older's inequality, we have
\begin{align*}
\left|\Im \int_{\R^d}u_{0}(x)\psi_R(0,x)\,dx\right|
&\le \int_{B(0,R)}|u_{0}(x)|\, dx \\
&\le \left(\int_{B(0,R)}|u_{0}(x)|^2\ev{x}^{2\alpha}\,dx\right)^{\frac{1}{2}}
\left(\int_{B(0,R)}\ev{x}^{-2\alpha}\,dx\right)^{\frac{1}{2}},
\end{align*}
from which the desired estimate follows.
\end{proof}

The following lemma plays a central role in the discussion of this paper, which describes a requirement for the non-zero scattering states.
\begin{lemma} \label{lem:3}
Let $u_{+} \in \mathcal{F}H^{\beta}$ with $\beta > d/2-d/p$ if $p>2$, otherwise $u_{+} \in L^{2}$.
Assume \eqref{asy:1} and \eqref{asy:2}.
If $u_+\not\equiv 0$, then the estimate
\begin{align*}
	I(R) \gtrsim R^{d+1-\frac{d}{2}p}
\end{align*}
holds for $R>0$ large enough.
\end{lemma}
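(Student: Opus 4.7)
The plan is to compare $I(R)$ with the contribution of the asymptotically free profile $U(t)u_+$ via the pointwise inequality $|u|^p\ge 2^{1-p}|U(t)u_+|^p-|v|^p$, where $v:=u-U(\cdot)u_+$. This reduces matters to a lower bound of order $R^{d+1-dp/2}$ for $\iint|U(t)u_+|^p\psi_R\,dxdt$ together with an $o(R^{d+1-dp/2})$ upper bound for $\iint|v|^p\psi_R\,dxdt$. The decisive point is to work not on the full support $\{|x|^2+t\le R^2\}$ but on the narrower cone
\[
\Omega_R:=\{(t,x):cR\le t\le R/(2C),\ |x|\le Ct\},
\]
where $C>1$ is chosen so large that $c_0:=\int_{|y|\le C}|\widehat{u_+}(y)|^p\,dy>0$ (possible because $u_+\not\equiv 0$; note that $\widehat{u_+}\in L^p_{\mathrm{loc}}$ follows from H\"older when $p\le 2$ and from Hausdorff--Young plus the weight condition $\beta>d/2-d/p$ when $p>2$), and $c\in(0,1/(2C))$ is a fixed small constant. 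On $\Omega_R$ one has $|x|^2+t\le R^2/2$ for $R$ large, so $\psi_R\equiv 1$ there.

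For the main term I would use the dispersive factorization $U(t)u_+(x)=(it)^{-d/2}e^{i|x|^2/(2t)}\mathcal{F}[M(t)u_+](x/t)$ with $M(t)f(x):=e^{i|x|^2/(2t)}f(x)$. Changing variables $y=x/t$,
\[
\iint_{\Omega_R}|U(t)u_+|^p\,dxdt=\int_{cR}^{R/(2C)}t^{d-dp/2}\int_{|y|\le C}|\mathcal{F}[M(t)u_+](y)|^p\,dy\,dt.
\]
Since $M(t)u_+\to u_+$ in $L^2$ by dominated convergence, Plancherel combined with H\"older on the bounded ball handles the case $p\le 2$, while for $p>2$ the inequality $|e^{i|x|^2/(2t)}-1|\lesssim(|x|^2/t)^{\gamma}$ together with the weighted assumption and Hausdorff--Young gives $\|\mathcal{F}[M(t)u_+]-\widehat{u_+}\|_{L^p(|y|\le C)}\lesssim t^{-\gamma}\to 0$ for some $\gamma>0$. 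Hence the inner integral is at least $c_0/2$ once $t\ge cR$ is large enough, and since $p<p_{\mathrm{st}}(d)<2+2/d$ yields $d+1-dp/2>0$, I conclude $\iint_{\Omega_R}|U(t)u_+|^p\,dxdt\gtrsim R^{d+1-dp/2}$.

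For the error term, the treatment depends on the regime of $p$. When $1<p\le 2$, H\"older in space on $|x|\le Ct$ yields $\int_{|x|\le Ct}|v|^p\,dx\lesssim t^{d(2-p)/2}\|v(t)\|_{L^2}^p$, and \eqref{asy:2} provides $\|v(t)\|_{L^2}\le\varepsilon(cR)\to 0$ uniformly for $t\ge cR$, so
\[
\iint_{\Omega_R}|v|^p\,dxdt\lesssim\varepsilon(cR)^p\int_{cR}^{R/(2C)}t^{d(2-p)/2}\,dt\lesssim\varepsilon(cR)^pR^{d+1-dp/2}=o(R^{d+1-dp/2}).
\]
When $2<p<p_{\mathrm{st}}(d)$, H\"older in time with the admissible exponent $q=4p/(d(p-2))$ combined with \eqref{asy:1} gives
\[
\int_{cR}^{R/(2C)}\|v(t)\|_{L^p}^p\,dt\le R^{1-p/q}\|v\|_{L^q(cR,\infty;L^p)}^p=o(R^{1-2p/q})=o(R^{d+1-dp/2}).
\]
Adding the two contributions then yields $I(R)\gtrsim R^{d+1-dp/2}$ for $R$ large.

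The main obstacle is calibrating the error estimate to the correct scale. A crude bound of $\iint|v|^p\psi_R$ over the whole support produces an extra factor of $R$ (the time extent of the support is $R^2$ rather than the natural scale $R$), which would swamp the main lower bound, and the scattering hypothesis by itself is not quantitative enough to recover this factor. Restricting to the cone $\Omega_R$ fixes this: the scale $t\sim R$ matches the dispersive concentration of $U(t)u_+$, and for $p>2$ the exponent $d(p-2)/(4p)$ in \eqref{asy:1} is calibrated exactly so that $R^{1-2p/q}=R^{d+1-dp/2}$, the $o(1)$ factor from scattering being what makes the error strictly smaller than the main term. The moving lower cutoff $T_R=cR\to\infty$ is equally essential in the case $p>2$, as it is what converts the $o$ in time from \eqref{asy:1} into an $o$ in $R$.
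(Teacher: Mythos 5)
Your proposal is correct and follows essentially the same route as the paper: restrict to a cone $\{|x|\lesssim t,\ t\sim R\}$ on which $\psi_R\equiv 1$, extract the main term $R^{d+1-\frac{d}{2}p}\norm{\widehat{u_{+}}}_{L^p(B(0,C))}^{p}$ via the factorization $U(t)=M(t)D(t)\mathcal{F}M(t)$, and absorb the scattering error by H\"older in space with \eqref{asy:2} for $p\le 2$ and H\"older in time in $L^qL^p$ with \eqref{asy:1} for $p>2$. The only cosmetic differences are that the paper uses the $L^p$ triangle inequality with the three-term decomposition $u=\phi_{+}+(U(t)u_{+}-\phi_{+})+(u-U(t)u_{+})$ instead of your pointwise inequality, and handles $M(t)u_{+}\to u_{+}$ by dominated convergence in $L^2$ (resp. $L^{p'}$) rather than your quantitative $t^{-\gamma}$ rate.
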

\begin{proof}
Take $r_{0}>1$ such that $\norm{\widehat{u_{+}}}_{L^{p}(B(0,r_{0}))} \neq 0$.
Note that $\widehat{u_{+}} \in L^{p}(B(0,r_{0}))$ holds by the assumption.
Set
\[
	\mathcal{D}_{r_0,R}=
	\left\{  (t, x) \in (0, R^{2}) \times \R^{d} \relmiddle| |x|\le r_0t,\; \frac{R}{4r_0} \le t \le \frac{R}{2r_0} \right\}
\]
for $R> 2$.
Remark that $\psi_{R} \equiv 1$ on $\mathcal{D}_{r_0,R}$.
We define
\[
      \phi_{+}(t) = M(t)D(t)\mathcal{F}u_{+},
\]
which is the asymptotics of $U(t)u_{+}$,
where $M(t)$ and $D(t)$ are unitary operators on $L^2$ defined by
\[
	[M(t)f](x) = e^{i\frac{|x|^2}{2t}} f(x),\qquad
	[D(t)f](x) = (it)^{-\frac{d}2} f \left( \frac{x}{t} \right)
\]
for any $t \neq 0$, respectively.
Let us decompose $u$ as
\[
	u= \phi_{+}(t) + (U(t)u_{+} - \phi_{+}(t)) + (u - U(t)u_{+}).
\]
By the triangle inequality in $L^p(\mathcal{D}_{r_0,R})$, we have
\begin{align}
\begin{aligned}
	I(R)^{\frac{1}{p}}
	\ge{}&
	\left(\iint_{\mathcal{D}_{r_0,R}}|\phi_{+}(t)|^p\,dxdt\right)^{\frac{1}{p}}
	\\
	&-
	\left(\iint_{\mathcal{D}_{r_0,R}} |U(t)u_{+} - \phi_{+}(t)|^{p}\,dxdt\right)^{\frac{1}{p}}
	- \left(\iint_{\mathcal{D}_{r_0,R}} |u - U(t)u_{+}|^{p}\, dxdt\right)^{\frac{1}{p}}.
\end{aligned}
	\label{remain:1}
\end{align}
Let us first estimate the last term in the right-hand side of \eqref{remain:1}.
We shall treat the case $p \le 2$.
Take $\varepsilon>0$ small enough.
By the assumption \eqref{asy:2}, there exists $t_{\ast}(\varepsilon)>0$ such that if $t \ge t_{\ast}(\varepsilon)$, then
\begin{align}
	\norm{u(t) - U(t)u_{+}}_{L^{2}} < \varepsilon.
\end{align}
Hence, H\"older's inequality leads to
\begin{align}
	\begin{aligned}
	\iint_{\mathcal{D}_{r_0,R}} |u(t)-U(t)u_+|^p\, dxdt
	&= \int_{\frac{R}{4r_{0}}}^{\frac{R}{2r_{0}}} \left( \int_{B(0,r_0t)} |u(t)-U(t)u_+|^p\,dx\right)\, dt \\
	&\lesssim \int_{\frac{R}{4r_{0}}}^{\frac{R}{2r_{0}}} \left( \int_{B(0,r_0t)}|u(t)-U(t)u_+|^2\, dx \right)^{\frac{p}{2}}
	(r_0t)^{d \left(  1-\frac{p}{2}\right)}\,dt \\
	&\lesssim \sup_{t \ge \frac{R}{4r_{0}}} \norm{u(t)-U(t) u_+}_{L^{2}}^p R^{d+1- \frac{d}{2}p} \\
	&< \varepsilon R^{d+1- \frac{d}{2}p}
	\end{aligned}
	\label{remain:2}
\end{align}
for any $R \ge 4r_{0} t_{\ast}(\varepsilon)$.
A direct calculation shows that $U(t) = M(t)D(t) \mathcal{F}M(t)$, and by Plancharel's theorem,
the second term in \eqref{remain:1} can be estimated as follows:
\begin{align*}
\iint_{\mathcal{D}_{r_0,R}}|U(t)u_{+} - \phi_{+}(t)|^p\, dxdt
	&= \iint_{\mathcal{D}_{r_0,R}}|D(t)\mathcal{F}(M(t)u_+-u_+)|^p\, dxdt \\
	&= \int_{\frac{R}{4r_0}}^{\frac{R}{2r_0}}
	t^{d-\frac{d}{2}p} \int_{B(0,r_0)}|\mathcal{F}(M(t)u_+-u_+)|^p\, dxdt \\
	&\lesssim \int_{\frac{R}{4r_0}}^{\frac{R}{2r_0}} (r_{0}t)^{d-\frac{d}{2}p}
	\norm{M(t)u_+-u_+}_{L^2}^p\, dt \\
	&\lesssim R^{d+1-\frac{d}{2}p}
	\left(\sup_{t \ge R/4r_0}\|M(t)u_+-u_+\|_{L^2}\right)^p.
\end{align*}
Since $u_{+} \in L^{2}$, $M(t) \rightarrow 1$ as $t \rightarrow \infty$ and $|M(t)| =1$, Lebesgue's dominated convergence theorem implies that for any $\varepsilon>0$, there exists $R_{\ast}(\varepsilon) > 1$ such that if $R \ge R_{\ast}(\varepsilon)$, then
\[
	\sup_{t \ge R/4r_0}\|M(t)u_+-u_+\|_{L^2} < \varepsilon.
\]
Then we conclude that
\begin{align}
	\iint_{\mathcal{D}_{r_0,R}}|U(t)u_+-\phi_+|^p\, dxdt
	\lesssim \varepsilon R^{d+1-\frac{d}{2}p}
	\label{remain:3}
\end{align}
for $R>0$ large enough.

Let us deal with the case $p > 2$, which occurs only in $d=1$, $2$. Set $q = \frac{4p}{d(p-2)}$.
Note that
\[
	\frac{2}{q}+\frac{d}{p} = \frac{d}{2}
	\iff -\frac{2p}{q} = d -\frac{d}{2}p.
\]
By \eqref{asy:1}, we have
\begin{align*}
	\iint_{\mathcal{D}_{r_0,R}}|u(t)-U(t)u_+|^p\,dxdt
	&\le \int_{\frac{R}{4r_{0}}}^{\frac{R}{2r_{0}}} \norm{u(t)-U(t)u_+}_{L^p}^p\,dt \\
	&\le \left( \int_{\frac{R}{4r_{0}}}^{\frac{R}{2r_{0}}}
	\norm{u(t)-U(t)u_+}_{L^p}^q\,dt \right)^{\frac{p}{q}}
	\left( \int_{\frac{R}{4r_{0}}}^{\frac{R}{2r_{0}}}\,dt \right)^{1-\frac{p}{q}} \\
	&\lesssim \left( R^{\frac{1}{q}}\|u(t)-U(t)u_+\|_{L^q(R/4r_0,\infty;L^p)} \right)^p
R^{1-\frac{2p}{q}} \\
	&\lesssim \varepsilon R^{d+1-\frac{d}{2}p}
\end{align*}
for any $R \ge 4r_{0} t_{\ast}(\varepsilon)$.
Also, one sees from Young's inequality that
\begin{align*}
\iint_{\mathcal{D}_{r_0,R}}|U(t)u_{+} - \phi_{+}(t)|^p\, dxdt
	&= \iint_{\mathcal{D}_{r_0,R}}|D(t)\mathcal{F}(M(t)u_+-u_+)|^p\, dxdt \\
	&= \int_{\frac{R}{4r_0}}^{\frac{R}{2r_0}}
	t^{d-\frac{d}{2}p} \int_{B(0,r_0)}|\mathcal{F}(M(t)u_+-u_+)|^p\, dxdt \\
	&\lesssim R^{d+1-\frac{d}{2}p}
	\left(\sup_{t\ge R/4r_0}\|M(t)u_+-u_+\|_{L^{p'}}\right)^p.
\end{align*}

Since $u_{+} \in \mathcal{F}H^{\beta} \hookrightarrow L^{p'}$, similarly to the above,
for any $\varepsilon>0$, there exists $R_{\ast}(\varepsilon) > 1$ such that if $R \ge R_{\ast}(\varepsilon)$, then
\[
	\sup_{t \ge R/4r_0}\|M(t)u_+-u_+\|_{L^{p'}} < \varepsilon,
\]
which implies \eqref{remain:3}.

Finally, the first term of \eqref{remain:1} can be explicitly calculated as
\begin{align}
	\begin{aligned}
	\iint_{\mathcal{D}_{r_0,R}}|\phi_{+}(t)|^p\,dxdt &= \iint_{\mathcal{D}_{r_0,R}} |D(t)\mathcal{F}u_+|^{p} \,dxdt \\
	&= \int_{R/4r_0}^{R/2r_0} t^{d-\frac{d}{2}p}\int_{\{|x|<r_0\}} |\mathcal{F}u_+|^{p}\, dxdt \\
	&= C R^{d+1-\frac{d}{2}p} \norm{\widehat{u_{+}}}_{L^p(B(0,r_0))}^{p}.
	\end{aligned}
	\label{remain:4}
\end{align}
Substituting \eqref{remain:2}, \eqref{remain:3} and \eqref{remain:4} into \eqref{remain:1},
we establish the desired estimate.
This completes the proof.
\end{proof}

\subsection{Proof of Theorem \ref{thm:1}}
Before starting the proof, we highlight the contrast between the gauge-invariant case \eqref{ginls} and the non-oscillatory case \eqref{nls}.
When the nonlinearity lacks oscillatory components, it becomes difficult to capture the interaction between solutions through the nonlinearity.
However, this feature allows us to analyze the size of the nonlinearity directly, and as shown in \eqref{remain:4},
we can extract a lower bound for the space-time integral of the nonlinearity from the long-time behavior of the solution.
This stands in sharp contrast to the gauge-invariant case and is precisely what enables the following argument to work.

\begin{proof}[Proof of Theorem \ref{thm:1}]
Assume $u_{+} \not\equiv 0$ and let us argue by contradiction.
Taking the real part of the both side of \eqref{wsol:1}, we see from Lemma \ref{lem:1} and \eqref{f:con} that
\begin{align*}
	I(R) &\lesssim \Re \iint_{(0, R^{2}) \times \R^{d}} F(u(t,x)) \psi_{R}(t,x)\, dxdt \\
	&\lesssim \left| \Im \int_{\R^d}u_{0}(x)\psi_R(0,x)\, dx \right| + R^{d - (d+2) \frac{1}{p}} I(R)^{\frac{1}{p}}
\end{align*}
for any $R>1$.
By Young's inequality, one has
\begin{align*}
	R^{d - (d+2) \frac{1}{p}} I(R)^{\frac{1}{p}}
	\le \varepsilon I(R) + C_{\varepsilon} R^{\frac{p}{p-1} \left( d-(d+2)\frac{1}{p} \right)}
	= \varepsilon I(R) + C_{\varepsilon} R^{d-\frac{2}{p-1}}
\end{align*}
for small $\varepsilon>0$.
These imply
\begin{align*}
	I(R) \lesssim \left| \Im \int_{\R^d}u_{0}(x)\psi_R(0,x)\, dx \right| + R^{d-\frac{2}{p-1}}.
\end{align*}

Let us only consider the case $p>1 + 2/d$ and $\alpha < d/2$, because the other cases are easier.
By Lemma \ref{lem:2}, we have
\begin{align*}
	\iint_{\mathcal{D}_{r_0,R}}|u|^p\,dxdt
	\lesssim R^{d-\frac{2}{p-1}}+R^{\frac{d-2\alpha}{2}}
\end{align*}
for any $R >2$.
Note that
\[
	d-\frac{2}{p-1} \ge \frac{d-2\alpha}{2}
\]
if $p-1 \ge 4/(d+2 \alpha) \iff \alpha \ge 2/(p-1) - d/2$.
Then Lemma \ref{lem:3} yields
\[
	R^{d+1 - \frac{d}{2}p} \lesssim R^{d-\frac{2}{p-1}}+R^{\frac{d-2\alpha}{2}} \lesssim R^{d-\frac{2}{p-1}}
\]
for large $R$.
This leads to
\[
	d+1 - \frac{d}{2}p \le d-\frac{2}{p-1}
      \iff p \ge p_{\mathrm{st}}(d),
\]
which contradicts the assumption $p < p_{\mathrm{st}}(d)$.
Thus $u_{+} \equiv 0$.
The proof is completed.
\end{proof}

\section*{Acknowledgments}
H.M. was supported by JSPS KAKENHI Grant Number 22K13941.
The authors would like to express their sincere appreciation to the anonymous referee for a careful reading of the manuscript and for providing thoughtful suggestions.
They are also sincerely grateful to Professor Takahisa Inui for his insightful comments regarding the assumption on the nonlinearity, and to Professor Satoshi Masaki for his valuable feedback on an earlier version of the manuscript.


\begin{bibdiv}
\begin{biblist}

\bib{B84}{article}{
      author={Barab, Jacqueline~E.},
       title={Nonexistence of asymptotically free solutions for a nonlinear {S}chr\"{o}dinger equation},
        date={1984},
        ISSN={0022-2488},
     journal={J. Math. Phys.},
      volume={25},
      number={11},
       pages={3270\ndash 3273},
         url={https://doi.org/10.1063/1.526074},
      review={\MR{761850}},
}

\bib{BGTV23}{article}{
      author={Burq, N.},
      author={Georgiev, V.},
      author={Tzvetkov, N.},
      author={Visciglia, N.},
       title={{$H^1$} scattering for mass-subcritical {NLS} with short-range nonlinearity and initial data in {$\Sigma$}},
        date={2023},
        ISSN={1424-0637},
     journal={Ann. Henri Poincar\'{e}},
      volume={24},
      number={4},
       pages={1355\ndash 1376},
         url={https://doi.org/10.1007/s00023-022-01245-2},
      review={\MR{4575488}},
}

\bib{C03}{book}{
      author={Cazenave, Thierry},
       title={Semilinear {S}chr\"{o}dinger equations},
      series={Courant Lecture Notes in Mathematics},
   publisher={New York University, Courant Institute of Mathematical Sciences, New York; American Mathematical Society, Providence, RI},
        date={2003},
      volume={10},
        ISBN={0-8218-3399-5},
         url={https://doi.org/10.1090/cln/010},
      review={\MR{2002047}},
}

\bib{CW92}{article}{
      author={Cazenave, Thierry},
      author={Weissler, Fred~B.},
       title={Rapidly decaying solutions of the nonlinear {S}chr\"{o}dinger equation},
        date={1992},
        ISSN={0010-3616},
     journal={Comm. Math. Phys.},
      volume={147},
      number={1},
       pages={75\ndash 100},
         url={http://projecteuclid.org/euclid.cmp/1104250527},
      review={\MR{1171761}},
}

\bib{DL81}{article}{
      author={Dong, Guang~Chang},
      author={Li, Shu~Jie},
       title={On the initial value problem for a nonlinear {S}chr\"{o}dinger equation},
        date={1981},
        ISSN={0022-0396},
     journal={J. Differential Equations},
      volume={42},
      number={3},
       pages={353\ndash 365},
         url={https://doi.org/10.1016/0022-0396(81)90109-1},
      review={\MR{639226}},
}

\bib{FO16}{article}{
      author={Fujiwara, Kazumasa},
      author={Ozawa, Tohru},
       title={Finite time blowup of solutions to the nonlinear {S}chr\"{o}dinger equation without gauge invariance},
        date={2016},
        ISSN={0022-2488},
     journal={J. Math. Phys.},
      volume={57},
      number={8},
       pages={082103, 8},
         url={https://doi.org/10.1063/1.4960725},
      review={\MR{3535686}},
}

\bib{GLS97}{article}{
      author={Georgiev, Vladimir},
      author={Lindblad, Hans},
      author={Sogge, Christopher~D.},
       title={Weighted {S}trichartz estimates and global existence for semilinear wave equations},
        date={1997},
        ISSN={0002-9327},
     journal={Amer. J. Math.},
      volume={119},
      number={6},
       pages={1291\ndash 1319},
         url={http://muse.jhu.edu/journals/american_journal_of_mathematics/v119/119.6georgiev.pdf},
      review={\MR{1481816}},
}

\bib{GOV94}{article}{
      author={Ginibre, J.},
      author={Ozawa, T.},
      author={Velo, G.},
       title={On the existence of the wave operators for a class of nonlinear {S}chr\"{o}dinger equations},
        date={1994},
        ISSN={0246-0211},
     journal={Ann. Inst. H. Poincar\'{e} Phys. Th\'{e}or.},
      volume={60},
      number={2},
       pages={211\ndash 239},
         url={http://www.numdam.org/item?id=AIHPA_1994__60_2_211_0},
      review={\MR{1270296}},
}

\bib{Glassey81b}{article}{
      author={Glassey, Robert~T.},
       title={Existence in the large for {$ cmu=F(u)$} in two space dimensions},
        date={1981},
        ISSN={0025-5874},
     journal={Math. Z.},
      volume={178},
      number={2},
       pages={233\ndash 261},
         url={https://doi.org/10.1007/BF01262042},
      review={\MR{631631}},
}

\bib{Glassey81a}{article}{
      author={Glassey, Robert~T.},
       title={Finite-time blow-up for solutions of nonlinear wave equations},
        date={1981},
        ISSN={0025-5874},
     journal={Math. Z.},
      volume={177},
      number={3},
       pages={323\ndash 340},
         url={https://doi.org/10.1007/BF01162066},
      review={\MR{618199}},
}

\bib{HT87}{incollection}{
      author={Hayashi, Nakao},
      author={Tsutsumi, Yoshio},
       title={Remarks on the scattering problem for nonlinear {S}chr\"{o}dinger equations},
        date={1987},
   booktitle={Differential equations and mathematical physics ({B}irmingham, {A}la., 1986)},
      series={Lecture Notes in Math.},
      volume={1285},
   publisher={Springer, Berlin},
       pages={162\ndash 168},
         url={https://doi.org/10.1007/BFb0080593},
      review={\MR{921265}},
}

\bib{II15a}{article}{
      author={Ikeda, Masahiro},
      author={Inui, Takahisa},
       title={Small data blow-up of {$L^2$} or {$H^1$}-solution for the semilinear {S}chr\"{o}dinger equation without gauge invariance},
        date={2015},
        ISSN={1424-3199},
     journal={J. Evol. Equ.},
      volume={15},
      number={3},
       pages={571\ndash 581},
      review={\MR{3394699}},
}

\bib{II15b}{article}{
      author={Ikeda, Masahiro},
      author={Inui, Takahisa},
       title={Some non-existence results for the semilinear {S}chr\"{o}dinger equation without gauge invariance},
        date={2015},
        ISSN={0022-247X},
     journal={J. Math. Anal. Appl.},
      volume={425},
      number={2},
       pages={758\ndash 773},
         url={https://doi.org/10.1016/j.jmaa.2015.01.003},
      review={\MR{3303890}},
}

\bib{IS19}{article}{
      author={Ikeda, Masahiro},
      author={Sobajima, Motohiro},
       title={Sharp upper bound for lifespan of solutions to some critical semilinear parabolic, dispersive and hyperbolic equations via a test function method},
        date={2019},
        ISSN={0362-546X},
     journal={Nonlinear Anal.},
      volume={182},
       pages={57\ndash 74},
         url={https://doi.org/10.1016/j.na.2018.12.009},
      review={\MR{3894246}},
}

\bib{ISW2019}{article}{
      author={Ikeda, Masahiro},
      author={Sobajima, Motohiro},
      author={Wakasa, Kyouhei},
       title={Blow-up phenomena of semilinear wave equations and their weakly coupled systems},
        date={2019},
        ISSN={0022-0396},
     journal={J. Differential Equations},
      volume={267},
      number={9},
       pages={5165\ndash 5201},
         url={https://doi.org/10.1016/j.jde.2019.05.029},
      review={\MR{3991556}},
}

\bib{IW13}{article}{
      author={Ikeda, Masahiro},
      author={Wakasugi, Yuta},
       title={Small-data blow-up of {$L^2$}-solution for the nonlinear {S}chr\"{o}dinger equation without gauge invariance},
        date={2013},
        ISSN={0893-4983},
     journal={Differential Integral Equations},
      volume={26},
      number={11-12},
       pages={1275\ndash 1285},
         url={http://projecteuclid.org/euclid.die/1378327426},
      review={\MR{3129009}},
}

\bib{John1979}{article}{
      author={John, Fritz},
       title={Blow-up of solutions of nonlinear wave equations in three space dimensions},
        date={1979},
        ISSN={0025-2611},
     journal={Manuscripta Math.},
      volume={28},
      number={1-3},
       pages={235\ndash 268},
         url={https://doi.org/10.1007/BF01647974},
      review={\MR{535704}},
}

\bib{Kato80}{article}{
      author={Kato, Tosio},
       title={Blow-up of solutions of some nonlinear hyperbolic equations},
        date={1980},
        ISSN={0010-3640},
     journal={Comm. Pure Appl. Math.},
      volume={33},
      number={4},
       pages={501\ndash 505},
         url={https://doi.org/10.1002/cpa.3160330403},
      review={\MR{575735}},
}

\bib{Ka94}{incollection}{
      author={Kato, Tosio},
       title={An {$L^{q,r}$}-theory for nonlinear {S}chr\"{o}dinger equations},
        date={1994},
   booktitle={Spectral and scattering theory and applications},
      series={Adv. Stud. Pure Math.},
      volume={23},
   publisher={Math. Soc. Japan, Tokyo},
       pages={223\ndash 238},
         url={https://doi.org/10.2969/aspm/02310223},
      review={\MR{1275405}},
}

\bib{KMM25}{article}{
      author={Kawamoto, Masaki},
      author={Masaki, Satoshi},
      author={Miyazaki, Hayato},
       title={Global well-posedness and scattering in weighted space for nonlinear {S}chr\"{o}dinger equations below the {S}trauss exponent without gauge-invariance},
        date={2025},
        ISSN={0025-5831},
     journal={Math. Ann.},
      volume={392},
      number={1},
       pages={1051\ndash 1097},
         url={https://doi.org/10.1007/s00208-025-03121-w},
      review={\MR{4887783}},
}

\bib{MM}{article}{
      author={Masaki, Satoshi},
      author={Miyazaki, Hayato},
       title={Long range scattering for nonlinear {S}chr\"{o}dinger equations with critical homogeneous nonlinearity},
        date={2018},
        ISSN={0036-1410},
     journal={SIAM J. Math. Anal.},
      volume={50},
      number={3},
       pages={3251\ndash 3270},
         url={https://doi.org/10.1137/17M1144829},
      review={\MR{3815545}},
}

\bib{MM2}{article}{
      author={Masaki, Satoshi},
      author={Miyazaki, Hayato},
       title={Nonexistence of scattering and modified scattering states for some nonlinear {S}chr\"{o}dinger equation with critical homogeneous nonlinearity},
        date={2019},
        ISSN={0893-4983},
     journal={Differential Integral Equations},
      volume={32},
      number={3-4},
       pages={121\ndash 138},
         url={https://projecteuclid.org/euclid.die/1548212426},
      review={\MR{3909981}},
}

\bib{MMU}{article}{
      author={Masaki, Satoshi},
      author={Miyazaki, Hayato},
      author={Uriya, Kota},
       title={Long-range scattering for nonlinear {S}chr\"{o}dinger equations with critical homogeneous nonlinearity in three space dimensions},
        date={2019},
        ISSN={0002-9947},
     journal={Trans. Amer. Math. Soc.},
      volume={371},
      number={11},
       pages={7925\ndash 7947},
         url={https://doi.org/10.1090/tran/7636},
      review={\MR{3955539}},
}

\bib{MP01}{article}{
      author={Mitidieri, \`E.},
      author={Pokhozhaev, S.~I.},
       title={A priori estimates and the absence of solutions of nonlinear partial differential equations and inequalities},
        date={2001},
        ISSN={0371-9685},
     journal={Tr. Mat. Inst. Steklova},
      volume={234},
       pages={1\ndash 384},
      review={\MR{1879326}},
}

\bib{MN21}{article}{
      author={Murphy, Jason},
      author={Nakanishi, Kenji},
       title={Failure of scattering to solitary waves for long-range nonlinear {S}chr\"{o}dinger equations},
        date={2021},
        ISSN={1078-0947},
     journal={Discrete Contin. Dyn. Syst.},
      volume={41},
      number={3},
       pages={1507\ndash 1517},
         url={https://doi.org/10.3934/dcds.2020328},
      review={\MR{4201850}},
}

\bib{NO02}{article}{
      author={Nakanishi, Kenji},
      author={Ozawa, Tohru},
       title={Remarks on scattering for nonlinear {S}chr\"{o}dinger equations},
        date={2002},
        ISSN={1021-9722},
     journal={NoDEA Nonlinear Differential Equations Appl.},
      volume={9},
      number={1},
       pages={45\ndash 68},
         url={https://doi.org/10.1007/s00030-002-8118-9},
      review={\MR{1891695}},
}

\bib{Rammaha89}{incollection}{
      author={Rammaha, M.~A.},
       title={Nonlinear wave equations in high dimensions},
        date={1989},
   booktitle={Differential equations and applications, {V}ol. {I}, {II} ({C}olumbus, {OH}, 1988)},
   publisher={Ohio Univ. Press, Athens, OH},
       pages={322\ndash 326},
      review={\MR{1026235}},
}

\bib{Schaeffer85}{article}{
      author={Schaeffer, Jack},
       title={The equation {$u_{tt}-\Delta u=|u|^p$} for the critical value of {$p$}},
        date={1985},
        ISSN={0308-2105},
     journal={Proc. Roy. Soc. Edinburgh Sect. A},
      volume={101},
      number={1-2},
       pages={31\ndash 44},
         url={https://doi.org/10.1017/S0308210500026135},
      review={\MR{824205}},
}

\bib{S05}{article}{
      author={Shimomura, Akihiro},
       title={Nonexistence of asymptotically free solutions for quadratic nonlinear {S}chr\"{o}dinger equations in two space dimensions},
        date={2005},
        ISSN={0893-4983},
     journal={Differential Integral Equations},
      volume={18},
      number={3},
       pages={325\ndash 335},
      review={\MR{2122723}},
}

\bib{ST06}{article}{
      author={Shimomura, Akihiro},
      author={Tsutsumi, Yoshio},
       title={Nonexistence of scattering states for some quadratic nonlinear {S}chr\"{o}dinger equations in two space dimensions},
        date={2006},
        ISSN={0893-4983},
     journal={Differential Integral Equations},
      volume={19},
      number={9},
       pages={1047\ndash 1060},
      review={\MR{2262096}},
}

\bib{Sideris84}{article}{
      author={Sideris, Thomas~C.},
       title={Nonexistence of global solutions to semilinear wave equations in high dimensions},
        date={1984},
        ISSN={0022-0396},
     journal={J. Differential Equations},
      volume={52},
      number={3},
       pages={378\ndash 406},
         url={https://doi.org/10.1016/0022-0396(84)90169-4},
      review={\MR{744303}},
}

\bib{S74}{inproceedings}{
      author={Strauss, W.~A.},
       title={Nonlinear scattering theory},
        date={1974},
   booktitle={Scattering theory in mathematical physics},
      editor={Lavita, J.~A.},
      editor={Marchand, J.-P.},
   publisher={Springer Netherlands},
     address={Dordrecht},
       pages={53\ndash 78},
}

\bib{S81a}{article}{
      author={Strauss, Walter~A.},
       title={Nonlinear scattering theory at low energy},
        date={1981},
        ISSN={0022-1236},
     journal={J. Functional Analysis},
      volume={41},
      number={1},
       pages={110\ndash 133},
         url={https://doi.org/10.1016/0022-1236(81)90063-X},
      review={\MR{614228}},
}

\bib{YT85}{article}{
      author={Tsutsumi, Yoshio},
       title={Scattering problem for nonlinear {S}chr\"{o}dinger equations},
        date={1985},
        ISSN={0246-0211},
     journal={Ann. Inst. H. Poincar\'{e} Phys. Th\'{e}or.},
      volume={43},
      number={3},
       pages={321\ndash 347},
         url={http://www.numdam.org/item?id=AIHPB_1985__43_3_321_0},
      review={\MR{824843}},
}

\bib{YT87}{article}{
      author={Tsutsumi, Yoshio},
       title={{$L^2$}-solutions for nonlinear {S}chr\"{o}dinger equations and nonlinear groups},
        date={1987},
        ISSN={0532-8721},
     journal={Funkcial. Ekvac.},
      volume={30},
      number={1},
       pages={115\ndash 125},
         url={http://www.math.kobe-u.ac.jp/~fe/xml/mr0915266.xml},
      review={\MR{915266}},
}

\bib{TY84}{article}{
      author={Tsutsumi, Yoshio},
      author={Yajima, Kenji},
       title={The asymptotic behavior of nonlinear {S}chr\"odinger equations},
        date={1984},
        ISSN={0273-0979},
     journal={Bull. Amer. Math. Soc. (N.S.)},
      volume={11},
      number={1},
       pages={186\ndash 188},
         url={http://dx.doi.org/10.1090/S0273-0979-1984-15263-7},
      review={\MR{741737}},
}

\bib{YZ06}{article}{
      author={Yordanov, Borislav~T.},
      author={Zhang, Qi~S.},
       title={Finite time blow up for critical wave equations in high dimensions},
        date={2006},
        ISSN={0022-1236},
     journal={J. Funct. Anal.},
      volume={231},
      number={2},
       pages={361\ndash 374},
         url={https://doi.org/10.1016/j.jfa.2005.03.012},
      review={\MR{2195336}},
}

\bib{Zhou2007}{article}{
      author={Zhou, Yi},
       title={Blow up of solutions to semilinear wave equations with critical exponent in high dimensions},
        date={2007},
        ISSN={0252-9599},
     journal={Chinese Ann. Math. Ser. B},
      volume={28},
      number={2},
       pages={205\ndash 212},
         url={https://doi.org/10.1007/s11401-005-0205-x},
      review={\MR{2316656}},
}

\end{biblist}
\end{bibdiv}

\end{document}